\documentclass[a4paper]{amsart}
\usepackage{fixltx2e}
\usepackage[T1]{fontenc}
\usepackage{amssymb,amsthm,amsmath}
\usepackage[british]{babel}
\usepackage[all]{xy}
\usepackage{mathabx}
\usepackage{calrsfs}
\usepackage{graphicx}

\theoremstyle{plain}
\newtheorem*{thm}{Hagemann's Theorem}
\newtheorem{theorem}{Theorem}
\newtheorem{proposition}{Proposition}

\theoremstyle{definition}
\newtheorem{example}{Example}

\newcommand{\defn}{\textbf}
\newcommand{\comp}{\raisebox{0.2mm}{\ensuremath{\scriptstyle{\circ}}}}

\newcommand{\CC}{\ensuremath{\mathcal{C}}}
\newcommand{\VV}{\ensuremath{\mathcal{V}}}
\newcommand{\WW}{\ensuremath{\mathcal{W}}}

\newcommand{\noproof}{\hfill \qed}

\newcommand{\nPerm}[1]{\ensuremath{\text{$#1$-\textsf{Perm}}}}
\newcommand{\Perm}{\ensuremath{\mathsf{Perm}}}

\newdir{>}{{}*:(1,-.2)@^{>}*:(1,+.2)@_{>}}
\newdir{<}{{}*:(1,+.2)@^{<}*:(1,-.2)@_{<}}

\def\pullback{
 \ar@{-}[]+R+<6pt,-1pt>;[]+RD+<6pt,-6pt>%
 \ar@{-}[]+D+<1pt,-6pt>;[]+RD+<6pt,-6pt>}

\hyphenation{cat-e-go-ri-cal cat-e-go-ries e-quiv-a-len-ces e-quiv-a-len-ce co-ker-nels gen-er-al-ized ex-act-ness ex-ten-sion Ja-ne-lid-ze pro-jec-tive nil-po-tent co-ker-nel a-sphe-ri-cal group-oid al-ter-na-ting semi-lat-tice semi-lat-tices e-quiv-a-lent abe-li-an-i-za-tion com-mu-ta-tor com-mu-ta-tors prop-o-si-tion cen-tral def-i-ni-tion re-sult re-sults di-rec-tion di-rec-tions com-po-nent com-po-nents de-nom-i-na-tor in-ter-pre-ta-tion di-groups sub-sec-tion cen-tral-i-ty co-ho-mo-lo-gy di-men-sion-al be-tween de-ter-mined de-ter-mine op-er-a-tor op-er-a-tors al-ge-bras can-di-dates ma-trix mul-ti-pli-ca-tions na-tu-ral-ity mul-ti-pli-ca-tive de-ter-mine de-ter-mines more-o-ver sim-i-lar-ly ap-pli-ca-tions ad-mis-si-ble con-ju-ga-tion com-mu-ta-tive co-prod-ucts as-so-ci-a-tor as-so-ci-a-tors com-mu-ta-tive com-mu-ta-ti-vi-ty lou-vain e-quiv-a-lent-ly con-struc-tions pro-to-split res-o-lu-tion res-o-lu-tions per-mu-ta-bi-li-ty}

\begin{document}

\title{An observation on $n$-permutability}

\author{Nelson Martins-Ferreira}
\author[Diana Rodelo]{Diana Rodelo}
\author[Tim Van~der Linden]{Tim Van~der Linden}

\email{martins.ferreira@ipleiria.pt}
\email{drodelo@ualg.pt}
\email{tim.vanderlinden@uclouvain.be}

\address{Departamento de Matem\'atica, Escola Superior de Tecnologia e Gest\~ao, Centro para o Desenvolvimento R\'apido e Sustentado do Produto, Instituto Poli\-t\'ecnico de Leiria, Leiria, Portugal}
\address{CMUC, Universidade de Coimbra, 3001--454 Coimbra, Portugal}\address{Departamento de Matem\'atica, Faculdade de Ci\^{e}ncias e Tecnologia, Universidade do Algarve, Campus de
Gambelas, 8005--139 Faro, Portugal}
\address{Institut de recherche en math\'ematique et physique, Universit\'e catholique de Louvain, che\-min du cyclotron~2 bte~L7.01.02, B--1348 Louvain-la-Neuve, Belgium}

\thanks{The first author was supported by IPLeiria/ESTG-CDRSP and Funda\c c\~ao para a Ci\^encia e a Tecnologia (under grant number SFRH/BPD/4321/2008). The second author's research was supported by CMUC, funded by the European 
Regional Development Fund through the program COMPETE and by the Portuguese Government through the FCT-Funda\c c\~ ao para a Ci\^encia e a Tecnologia under the project 
PEst-C/MAT/UI0324/2011. The third author works as \emph{charg\'e de recherches} for Fonds de la Recherche Scientifique--FNRS and would like to thank CMUC for its kind hospitality during his stay in Coimbra. All three were supported by the FCT Grant PTDC/MAT/120222/2010 through the European program COMPETE/FEDER}

\keywords{Mal'tsev category; Goursat category; $n$-permutable category; preorder; equivalence relation; internal category; internal groupoid}

\subjclass[2010]{
08C05, 
18C10, 
18B99, 
18E10} 

\begin{abstract}
We prove that in a regular category all reflexive and transitive relations are symmetric if and only if every internal category is an internal groupoid. In particular, these conditions hold when the category is $n$-permutable for some~$n$.
\end{abstract}

\date{\today}

\maketitle

Let $\CC$ be a regular category. It is well known that any internal preorder, being a reflexive and transitive relation $(R,r_{1},r_{2})$ on an object $X$ of $\CC$, may be considered as an internal category in $\CC$. In fact, a preorder is the same thing as a \emph{thin} category, an internal category of which the domain and codomain morphisms $r_{1}$, $r_{2}\colon {R\to X}$ are jointly monic. This internal category will be a groupoid precisely when the given reflexive and transitive relation $R$ is symmetric, so that \emph{if in~$\CC$ every internal category is an internal groupoid, then all of its internal reflexive and transitive relations are equivalence relations}.

The converse implication is interesting due to its close relation with the following question: what conditions does a regular category need to satisfy for all internal categories in it to be internal groupoids? One of the main results of~\cite{Carboni-Pedicchio-Pirovano} gives a sufficient condition: the Mal'tsev property, that is, $2$-permutability $RS=SR$ of internal equivalence relations $R$, $S$ on the same object. But when~$\CC$ is a variety, already the strictly weaker $n$-permutability condition ($RSRS\ldots=SRSR\ldots$ with~$n$ factors $R$ or $S$ on each side) is sufficient~\cite{Rodelo:Internal-categories}. Furthermore---here we follow a remark in~\cite{MFVdL2}---a variety is $n$-permutable if and only if~\cite{CR} all of its internal reflexive and transitive relations are equivalence relations (= congruences). Altogether:

\begin{proposition}
\label{1}
If $\CC$ is a variety of universal algebras, then the following conditions are equivalent:
\begin{enumerate}
\item all preorders in $\CC$ are congruences;
\item all internal categories in $\CC$ are internal groupoids;
\item $\CC$ is $n$-permutable for some $n\geq 2$.\noproof
\end{enumerate}
\end{proposition}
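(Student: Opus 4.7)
The plan is to establish the cycle (i) $\Rightarrow$ (iii) $\Rightarrow$ (ii) $\Rightarrow$ (i) by assembling three results, two of which are already in the literature while the third is a one-line observation sketched in the introduction.

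I would begin with (ii) $\Rightarrow$ (i), which is the informal remark made in the opening paragraph: any internal preorder is a thin internal category, hence by (ii) a thin internal groupoid, and the existence of an inverse arrow together with jointly monic domain--codomain morphisms $r_{1}$, $r_{2}$ forces symmetry of the relation. This step uses only that $\CC$ is regular. Next, for (iii) $\Rightarrow$ (ii), I would invoke the main result of~\cite{Rodelo:Internal-categories}, which asserts that in any $n$-permutable regular category every internal category is already an internal groupoid; in particular this holds for any $n$-permutable variety.

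Finally, for (i) $\Rightarrow$ (iii) — the only implication that genuinely uses the variety hypothesis — I would cite~\cite{CR}: a variety of universal algebras is $n$-permutable for some $n\geq 2$ precisely when all of its internal reflexive transitive relations are equivalence relations. This supplies the missing direction at once, closing the cycle.

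The main obstacle is not technical but conceptual: none of the three implications takes real work once the cited theorems are in hand, yet only the step (i) $\Rightarrow$ (iii) rests on the variety hypothesis, and it does so via term-theoretic arguments specific to universal algebra. Removing this dependence — so as to obtain the equivalence of (i) and (ii) in every regular category — is precisely the programme that the rest of the paper announces in the abstract.
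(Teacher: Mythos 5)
Your proposal is correct and follows essentially the same route as the paper, whose proposition is left without a formal proof precisely because it is assembled from the same three ingredients you use: the thin-category observation for (ii) $\Rightarrow$ (i), the result of~\cite{Rodelo:Internal-categories} for (iii) $\Rightarrow$ (ii), and the characterisation of~\cite{CR} for (i) $\Rightarrow$ (iii). One small correction: the result of~\cite{Rodelo:Internal-categories} is proved there for $n$-permutable \emph{varieties}, not for arbitrary $n$-permutable regular categories (that generalisation is exactly what the present paper's main theorem supplies), but since you only invoke the varietal case the argument is unaffected.
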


This result is no longer true for regular categories. The number $n$ in the third condition is obtained through a construction on a free algebra, and it cannot be replaced by a purely categorical argument, as shows the following counterexample.

\begin{example}\label{Example on permutability algebras}
Consider the product category
\[
\Perm=\prod_{n\geq 2}\bigl(\nPerm{n}\bigr)=\nPerm{2}\times \nPerm{3}\times \cdots\times \nPerm{n}\times \cdots
\]
where, for $n\geq 2$, we let $\nPerm{n}$ be the ($n$-permutable) variety of \defn{$n$-permutability algebras} with operations $\theta_{1}$, \dots, $\theta_{n-1}$ for which the identities
\[
\begin{cases}
 \theta_1(s,t,t)=s,\\
 \theta_{i}(s,s,t) = \theta_{i+1}(s,t,t), & \text{for $i\in\{1, \dots, n-2\}$,}\\
 \theta_{n-1}(s,s,t)=t
\end{cases}
\]
hold. 

It is easy to see that $\Perm$ is a regular category. It is also clear that in $\Perm$, all preorders are equivalence relations: each of its components lies in some variety $\nPerm{n}$, where it will be a congruence. On the other hand, there is no $n\geq 2$ for which the category $\Perm$ is $n$-permutable, since otherwise $\nPerm{(n+1)}$ would be an $n$-permutable variety. Indeed, for any $n$ there are examples of $(n+1)$-permutable varieties which are not $n$-permutable~\cite{Graetzer, Mitschke, Schmidt, Hagemann-Mitschke}, and by forgetting structure these counterexamples can be made to work here too.
\end{example}

On the other hand, the equivalence between the upper two conditions in the proposition makes sense in general and, given any $n$-permutable category, we may ask whether they hold or not. As it turns out, the situation is as good as it could possibly be. The following characterisation of $n$-permutability due to Hagemann~\cite{Hagemann, Hagemann-Mitschke} was recently extended from varieties to regular categories~\cite{JRVdL1}.

\begin{thm}\label{TheB}
For a regular category $\CC$, and a natural number ${n\geqslant 2}$, the following conditions are equivalent:
\begin{enumerate}
\item $\CC$ has $n$-permutable congruences;
\item $R^\circ\leqslant R^{n-1}$ for any internal reflexive relation $R$ in $\CC$;
\item $R^n\leqslant R^{n-1}$ for any internal reflexive relation $R$ in $\CC$.\noproof
\end{enumerate}
\end{thm}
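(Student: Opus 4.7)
My plan is to prove the theorem by establishing direct implications from (i) to both (ii) and (iii), and reverse implications from (ii) to (i) and from (iii) to (i), so that the two conditions on reflexive relations are independently equivalent to $n$-permutability.

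For (i) $\Rightarrow$ (ii), I would mimic the classical Hagemann-Mitschke argument from the variety setting. In an $n$-permutable variety one has ternary terms $m_1,\ldots,m_{n-1}$ satisfying $m_1(x,y,y)=x$, $m_i(x,x,y)=m_{i+1}(x,y,y)$ for $1\leq i\leq n-2$, and $m_{n-1}(x,x,y)=y$. For a reflexive (hence compatible) relation $R$ and $(a,b)\in R^\circ$, setting $c_i=m_i(a,a,b)=m_{i+1}(a,b,b)$ produces a chain $a=c_0\,R\,c_1\,R\,\cdots\,R\,c_{n-1}=b$, each step following from applying $m_{i+1}$ to the triple of pairs $(a,a),(b,a),(b,b)$, which all lie in $R$ by reflexivity and the hypothesis $b\,R\,a$. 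For a general regular category, I would invoke the characterisation of $n$-permutable regular categories by \emph{approximate} Mal'tsev-type co-operations (in the sense of Bourn-Janelidze), which permit the same termwise reasoning to be carried out on generalised elements; the subobject inclusion $R^\circ \leq R^{n-1}$ then follows by the metatheorem for regular categories after resolving the approximate terms on a suitable regular-epimorphic cover. The implication (i) $\Rightarrow$ (iii) follows by a parallel construction, in which the approximate operations collapse a chain of length $n$ in a reflexive $R$ to a chain of length $n-1$ between the same endpoints.

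For the converse implications (ii) $\Rightarrow$ (i) and (iii) $\Rightarrow$ (i), I would apply the hypothesis to generic reflexive configurations---typically reflexive subobjects obtained by pullback from canonical constructions involving two equivalence relations on a common object---and extract from the resulting subobject inclusions the approximate Mal'tsev-type co-operations that characterise $n$-permutability. Where (i) $\Rightarrow$ (ii) and (i) $\Rightarrow$ (iii) use the co-operations to collapse compositions of a reflexive $R$, here the collapsing hypothesised by (ii) or (iii) is used to reconstruct the co-operations, which then yield $n$-permutability of congruences via the standard characterisation.

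The main obstacle is the transposition of the Hagemann-Mitschke construction from varieties to regular categories. In a variety the arguments are essentially mechanical term manipulations, but in a regular category actual ternary terms are unavailable and must be replaced by approximate co-operations, with image factorisations tracked carefully through the chain of composites. In the reverse direction, identifying the correct generic reflexive configuration from which to extract the approximate operations is equally delicate, and is the technical heart of the extension of Hagemann's theorem beyond the variety setting.
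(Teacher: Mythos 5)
You are proving a statement the paper itself does not prove: Theorem~\ref{TheB} is stated with no proof and attributed to Hagemann~\cite{Hagemann, Hagemann-Mitschke} for varieties and to~\cite{JRVdL1} for the regular-category version, so your sketch can only be compared with that cited work, and judged as a standalone argument it has genuine gaps. The variety-level computation you give for (i)~$\Rightarrow$~(ii) (the chain $c_i=m_i(a,a,b)=m_{i+1}(a,b,b)$) is indeed the classical Hagemann--Mitschke argument, but the passage from varieties to regular categories is precisely the content of the theorem, and your proposal replaces it by an appeal to two unstated results: a characterisation of $n$-permutable regular categories by ``approximate Mal'tsev-type co-operations'' and a ``metatheorem for regular categories''. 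The embedding-style metatheorems for regular categories transfer element-wise statements about finite limits, images and regular epimorphisms; they do not supply operations, so they cannot import a term computation into a category that has no terms. The approximate-co-operation characterisation of $n$-permutability is itself a theorem of essentially the same depth as the one to be proved (it belongs to the same circle of results as~\cite{JRVdL1}), so invoking it without statement or proof---and without explaining how the regular-epimorphic covers on which each approximate operation lives are composed and pulled back coherently across the $n-1$ steps of the chain---leaves the implication unestablished rather than proved.

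The converse directions are where the real difficulty sits, and there your proposal only gestures. Classically, (ii)~$\Rightarrow$~(i) is a free-algebra argument: one applies (ii) to the reflexive compatible relation generated by $\{(x,y),(y,z)\}$ on the free algebra on three generators and reads the Hagemann--Mitschke terms off the resulting chain~\cite{CR, Hagemann-Mitschke}. That device is exactly what is unavailable in a general regular category---the paper makes this very point when explaining why Proposition~\ref{1} fails beyond varieties---and the obvious categorical substitute does not suffice: applying (ii) to the reflexive relation $EF$, for congruences $E$ and $F$ on the same object, yields only $FE\leqslant (EF)^{n-1}$, an alternating composite with $2(n-1)$ factors, which is weaker than $n$-permutability as soon as $n\geqslant 3$. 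Saying that one should use ``generic reflexive configurations obtained by pullback from canonical constructions involving two equivalence relations'' names the problem rather than solving it; identifying the right configuration and extracting $n$-permutability from the inclusion it produces is the technical heart of~\cite{JRVdL1} and is absent here. Finally, condition (iii) needs its own converse (or a proof that it implies (ii) or (i)); your plan treats it as parallel to (ii) but gives no argument for it.
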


This may now be used to obtain our main result.

\begin{theorem}\label{Main theorem}
If $\CC$ is a regular category, then the following conditions are equivalent:
\begin{enumerate}
\item all reflexive and transitive relations in $\CC$ are equivalence relations;
\item all internal categories in $\CC$ are internal groupoids.
\end{enumerate}
Furthermore, these conditions hold if $\CC$ is $n$-permutable for some $n\geq 2$.
\end{theorem}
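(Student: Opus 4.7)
The equivalence (i) $\Leftrightarrow$ (ii) splits into an easy direction and a more delicate one. For (ii) $\Rightarrow$ (i), the opening paragraph already contains the argument: a reflexive and transitive relation $R$ on $X$ is the same thing as a thin internal category (the one whose $(r_1, r_2)$ is a monomorphism); by (ii) this thin category is a groupoid, and a thin groupoid is precisely a symmetric relation, hence an equivalence relation.

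For the reverse direction (i) $\Rightarrow$ (ii), let $\mathbb{C} = (C_0, C_1, d, c, e, m)$ be any internal category in $\CC$. The plan is to apply (i) to a reflexive and transitive relation naturally associated with $\mathbb{C}$ and then read off the groupoid structure from the resulting symmetry. The obvious first step is to take the (regular epi, mono)-factorisation of $(d, c) \colon C_1 \to C_0 \times C_0$ and produce a relation $R$ on $C_0$; this $R$ is reflexive through $e$ and transitive through $m$, hence by (i) symmetric. The nontrivial task is then to promote symmetry at the level of $C_0$ to an actual inversion morphism $i \colon C_1 \to C_1$ in $\CC$. I expect this to be done by applying (i) a second time, to a more refined reflexive and transitive relation built from $C_1$---plausibly on the pullback $C_1 \times_{C_0} C_1$ of composable arrows, whose symmetry should express precisely the left/right invertibility of the composition $m$---and then using regularity of $\CC$ (image factorisation, stability of regular epimorphisms under pullback) to extract the morphism $i$.

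For the final assertion, suppose $\CC$ is $n$-permutable for some $n \geq 2$. Given any reflexive and transitive relation $R$ in $\CC$, Hagemann's Theorem gives $R^\circ \leq R^{n-1}$. Transitivity $R \circ R \leq R$ iterates to $R^{n-1} \leq R$; combining these inclusions yields $R^\circ \leq R$, so $R$ is symmetric and therefore an equivalence relation. This establishes (i), and hence also (ii) by the equivalence just proved.

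The main obstacle will be the \emph{promotion} step in (i) $\Rightarrow$ (ii). Symmetry of the image relation on $C_0$ only asserts that for each arrow $f \colon a \to b$ some arrow $b \to a$ exists in $\mathbb{C}$; this is not by itself enough to build a coherent, internally defined inversion morphism. The crux is to identify the right auxiliary reflexive and transitive relation whose symmetry directly encodes the compatibility needed to produce $i$ as a morphism of $\CC$, after which the existence of $i$ should follow formally from the regular-category machinery.
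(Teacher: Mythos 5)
Your treatment of (ii) $\Rightarrow$ (i) (thin categories are exactly preorders, and a thin groupoid is a symmetric relation) and of the final assertion (Hagemann's Theorem gives $R^\circ\leq R^{n-1}$, transitivity gives $R^{n-1}\leq R$, hence symmetry) both match the paper and are correct. The problem is the direction (i) $\Rightarrow$ (ii), which is the entire substance of the theorem: what you offer there is a plan whose decisive step you yourself flag as unresolved, and the candidate relations you name are not the ones that make the argument work. The image of $(d,c)\colon C_1\to C_0\times C_0$ is, as you observe, too coarse---its symmetry only says that for every arrow $a\to b$ \emph{some} arrow $b\to a$ exists, which cannot produce an inversion morphism. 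Your fallback guess, ``a reflexive and transitive relation on the pullback $C_1\times_{C_0}C_1$,'' is left entirely unspecified, so the proof has a genuine gap exactly at its crux.

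The paper's device (adapting Carboni--Pedicchio--Pirovano from the Mal'tsev setting, with image factorisations replacing difunctionality) is a preorder on the object of \emph{morphisms} $M=C_1$ itself, not on $C_0$ and not on composable pairs: take the image $S$ of the span $(\pi_1,m)\colon M*M\to M\times M$, so that, in terms of generalised elements, $\langle\beta,\alpha\rangle\in S$ iff $\gamma\comp\beta=\alpha$ for some $\gamma$ (up to precomposition with a regular epimorphism). One checks $S$ is reflexive and transitive, so (i) makes it symmetric; applying symmetry to $\langle id\alpha,\alpha\rangle$ and to the analogous right-sided pair yields left and right inverses for each generalised element $\alpha$, which agree after pulling back the two regular epimorphisms involved. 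Even then you are not done: the inverse $\overline\alpha$ is only defined over a cover $p\colon Y\to X$, and descending it to an actual morphism requires first proving the left cancellation property of $m$ (using the two-sided inverses just constructed) and then using it on the kernel pair of $p$ to show $\overline\alpha$ equalises the projections, so that it factors through $p$ by regularity. Specialising to $\alpha=1_M$ finally gives the inversion $M\to M$. None of these steps---the correct auxiliary preorder, its transitivity, the cancellation lemma, and the descent along the regular epimorphism---appear in your proposal, so as it stands the main implication is not proved.
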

\begin{proof}
We already recalled that the second condition is stronger than the first. For \mbox{(i) $\Rightarrow$ (ii)} it suffices to observe that the argument given by Carboni, Pedicchio and Pirovano in the Mal'tsev context~\cite[Theorem~2.2]{Carboni-Pedicchio-Pirovano} may be adapted to hold in regular categories. Their proof uses difunctionality of internal relations where we can use image factorisations.

Consider an internal category
\[
\xymatrix{M*M \ar[r]^-{m} & M \ar@<1ex>[r]^-{d} \ar@<-1ex>[r]_-{c} & O \ar[l]|-{i}}
\]
where $M*M$, the \emph{object of composable pairs} $\{\langle\beta,\gamma\rangle\mid c\beta=d\gamma\}$, denotes the pullback of $c$ and $d$, while the morphism $m$ is the composition. The image of the span
\[
\xymatrix{& M*M \ar[ld]_-{\pi_1} \ar[rd]^-{m}\\
M && M}
\]
is a relation on $M$ which we write $S$. Using generalised elements as in~\cite{Carboni-Kelly-Pedicchio}, it makes sense to say as on page~103 of~\cite{Carboni-Pedicchio-Pirovano} that a couple of arrows $\langle\beta,\alpha\rangle\colon {X\to M\times M}$ is in $S$ if and only if there exists an arrow $\gamma$ in $M$ for which $\gamma\comp\beta=\alpha$.
\[
\vcenter{\xymatrix@1@R=2.4495em@C=1.4142em{& {\cdot} \ar@{->}[ld]_-{\beta}\\
{\cdot} && {\cdot} \ar@{<-}[lu]_-{\alpha} \ar@{<--}[ll]^-{\gamma}}}
\]
More explicitly, there should exist a morphism $\gamma\colon Y\to M$ and a regular epimorphism $p\colon {Y\to X}$ such that ${m\langle \beta p,\gamma\rangle=\alpha p}$. In fact, as we shall see below, when   $m$ satisfies the \emph{left cancellation property}, we may choose $p=1_{X}$. 

The relation $S$ is not just reflexive as mentioned in~\cite{Carboni-Pedicchio-Pirovano}, but it is also transitive. Hence condition (i) tells us that $S$ is an equivalence relation on $M$. Suppose indeed that $\alpha$, $\beta$, $\delta\colon {X\to M}$ are such that $\langle\beta,\alpha\rangle$ and $\langle\delta,\beta\rangle$ are in~$S$. Then we have $\gamma$ and $p$ as above, and also a morphism $\epsilon\colon Y'\to M$ and a regular epimorphism $p'\colon Y'\to X$ with ${m\langle \delta p',\epsilon\rangle=\beta p'}$. Taking the pullback
\[
\xymatrix{Z \ar[r]^-{q} \ar[d]_-{q'} \pullback & Y \ar[d]^-{p} \\
Y' \ar[r]_-{p'} & X}
\]
of $p$ and $p'$ and writing $\mu=m\langle \epsilon q',\gamma q\rangle$ we may calculate
\begin{align*}
m\langle \delta p'q',\mu\rangle = m\langle \delta p'q',m\langle \epsilon q',\gamma q\rangle\rangle
&= m\langle m\langle\delta p',\epsilon\rangle q',\gamma q\rangle\\
&= m\langle \beta p' q',\gamma q\rangle
= m\langle \beta p,\gamma\rangle q
= \alpha p q = \alpha p' q'
\end{align*}
to see that $\langle\delta,\alpha\rangle$ is in $S$. It follows that $S$ is transitive.

Consider the composites $id$ and $ic\colon M\to M$. Given any $\alpha\colon {X\to M}$, the pair $\langle id\alpha,\alpha\rangle$ is in $S$. The symmetry of $S$ gives us $\langle \alpha, id\alpha\rangle$ in $S$, which yields a generalised element ${}^{\bullet}\alpha$ of $M$ such that ${}^{\bullet}\alpha\comp\alpha=id\alpha$  as above. Via an analogous argument we obtain a generalised element $\alpha^{\bullet}$ of $M$ satisfying $\alpha\comp\alpha^{\bullet}=ic\alpha$. More precisely, ${}^{\bullet}\alpha\colon{Y\to X}$ and $m\langle \alpha p,{}^{\bullet}\alpha\rangle=id\alpha p$ for some regular epimorphism $p\colon {Y\to X}$, while $\alpha^{\bullet}\colon {Y'\to X}$ and $m\langle \alpha^{\bullet},\alpha p'\rangle=ic\alpha p'$ for some regular epimorphism $p'\colon {Y'\to X}$. Taking again the above pullback of $p$ and~$p'$,
\begin{align*}
{}^{\bullet}\alpha q=m\langle ic\alpha p'q', {}^{\bullet}\alpha q\rangle &=m\langle m \langle \alpha^{\bullet}q',\alpha p'q'\rangle, {}^{\bullet}\alpha q\rangle\\
&=m\langle \alpha^{\bullet}q', m \langle \alpha p'q', {}^{\bullet}\alpha q\rangle\rangle
=m\langle \alpha^{\bullet}q', id\alpha p'q'\rangle
=\alpha^{\bullet}q'
\end{align*}
so $\overline\alpha\coloneq {}^{\bullet}\alpha q=\alpha^{\bullet}q'\colon {Z\to M}$, together with the regular epimorphism $pq\colon {Z\to X}$, is a two-sided inverse for $\alpha$.

We can use this to show that the composition satisfies the \defn{left cancellation property}: $\gamma\comp\beta=\gamma\comp\delta$ implies $\beta=\delta$. Given $\beta$, $\gamma$, $\delta\colon {X\to M}$ such that $c\beta=c\delta=d\gamma$, consider $\overline\gamma\colon Y\to M$ and the corresponding regular epimorphism $p\colon {Y\to X}$. The equality $m\langle\beta,\gamma\rangle=m\langle\delta,\gamma\rangle$ then implies
\begin{align*}
\beta p &= m\langle \beta p,id\gamma p\rangle
=m\langle \beta p,m\langle\gamma p, \overline\gamma\rangle\rangle
=m\langle m\langle\beta,\gamma\rangle p, \overline\gamma\rangle\\
&=m\langle m\langle\delta,\gamma\rangle p, \overline\gamma\rangle
=m\langle \delta p,m\langle\gamma p, \overline\gamma\rangle\rangle
=m\langle \delta p,id\gamma p\rangle
=\delta p,
\end{align*}
so $\beta=\delta$ as claimed.

The left cancellation property now allows us to lift the inverse $\overline\alpha\colon {Y\to M}$ of $\alpha\colon{X\to M}$ over the enlargement of domain $p\colon Y\to X$ which comes with it to a morphism $\alpha^{-1}\colon {X\to M}$.  To see this, consider the kernel relation $(R,\pi_{1},\pi_{2})$ of $p$
\[
\xymatrix{R \ar@<.5ex>[r]^-{\pi_{1}} \ar@<-.5ex>[r]_-{\pi_{2}} & Y \ar[r]^-{p} \ar[rd]_-{\overline\alpha} & X \ar@{.>}[d]^-{\alpha^{-1}} \\
&& M}
\]
and note that 
\[
m\langle \overline\alpha\pi_{1}, \alpha p\pi_{1}\rangle = ic\alpha p\pi_{1}
\qquad\text{and}\qquad
m\langle \overline\alpha\pi_{2}, \alpha p\pi_{2}\rangle = ic\alpha p\pi_{2}.
\]
Since $p\pi_{1}=p\pi_{2}$ by definition, left cancellation gives $\overline\alpha\pi_{1}=\overline\alpha\pi_{2}$, so that the morphism $\overline\alpha\colon {Y\to M}$ does indeed lift over $p$.

We now let $\alpha\colon {X\to M}$ be $1_{M}\colon {M\to M}$. The inverse $s=\alpha^{-1}=1_{M}^{-1}\colon {M\to M}$ is then a genuine inversion making the given internal category into a groupoid. This finishes the proof of~\mbox{(i) $\Rightarrow$ (ii)}.

For the final statement, suppose that $R$ is a reflexive and transitive relation. Then $R^{\circ}\leq R^{n-1}$ by Hagemann's Theorem while $R^{n-1}\leq R$ by transitivity of~$R$.
\end{proof}

In stark contrast with the above result, recall that a regular category is Mal'tsev if and only if every reflexive relation in it is an equivalence relation, while on the other hand, the so-called \defn{Lawvere condition} ``all internal reflexive graphs are internal groupoids'' means that the category is \emph{naturally Mal'tsev}~\cite{Carboni-Affine-Spaces, Johnstone:Maltsev}.  

We have just analysed the equivalence (A) in the picture
\[
\resizebox{\textwidth}{!}
{\xymatrix@=2em{ & \fbox{\begin{tabular}{ccc}internal & & internal \vspace{-5pt}\\ & = \vspace{-5pt}\\ groupoids & & categories\end{tabular}} \ar@{<=>}[ddl]_-{\txt{(A)}} \ar@{=>}[ddr]^-{\txt{(B)}} \\\\
\fbox{\begin{tabular}{ccc}internal & & internal \\ reflexive \& transitive & = & equivalence \\ relations & & relations\end{tabular}} && \fbox{\begin{tabular}{ccc}internal & & internal \vspace{-5pt}\\ & = \vspace{-5pt}\\ monoids & & groups\end{tabular}}}}
\]
and its relation with $n$-permutability. It is also clear that in any regular category which satisfies the equivalent conditions of Theorem~\ref{Main theorem}, all internal monoids are groups. One could now ask whether the implication (B) is also an equivalence and what is the role of $n$-permutability here.

By Theorem~1.4.5 in~\cite{Borceux-Bourn}, in a unital category, any internal monoid is commutative. Thus we can already conclude two things:
\begin{enumerate}
\item on the one hand, in a strongly unital category, any internal monoid is an abelian group~\cite[Theorem~1.9.5]{Borceux-Bourn};
\item on the other hand, if $\CC$ is regular and unital and the equivalent conditions of Theorem~\ref{Main theorem} hold, then in $\CC$ all internal monoids are abelian groups.
\end{enumerate}
So, any pointed Mal'tsev category, being strongly unital~\cite[Theorem~2.2.9]{Borceux-Bourn}, is such that every internal monoid in it is an internal abelian group. The same property holds for pointed Goursat (= $3$-permutable) categories~\cite[Corollary 3.4]{Bourn-Gran-Modularity}, even though these categories need not be (strongly) unital, as shows the following counterexample.

\begin{example}
We consider the variety $\VV$ of \defn{implication algebras}, which are $(I,\cdot)$ that satisfy
\[
\left\{\begin{aligned}
&(x y) x=x\\
&(x y) y=(y x) x \\
&x (y z)=y (x z)
\end{aligned}\right.
\]
where we write $x\cdot y=xy$. It is shown in~\cite{Mitschke, Hagemann-Mitschke} that $\VV$ is $3$-permutable. In particular, $\VV$ satisfies the equivalent conditions of Theorem~\ref{Main theorem}. In order to prove that $\VV$ is not unital, we construct a punctual span
\[
\xymatrix{X \ar@<-.5ex>[r]_-{s} & Z \ar@<-.5ex>[l]_-{f} \ar@<.5ex>[r]^-{g} & Y \ar@<.5ex>[l]^-{t}}
\]
as in~\cite[Theorem~1.2.12]{Borceux-Bourn} and such that the factorisation $\langle f,g\rangle\colon Z\to X\times Y$ is not a regular epimorphism. Put $X=\{1,2\}$, $Y=\{1,3\}$ and $Z=\{1,2,3\}$ with respective multiplication tables
\[
\begin{array}{c|cc}
 \cdot & 1 & 2 \\ \hline
 1 & 1 & 2 \\
 2 & 1 & 1
\end{array},
\qquad
\begin{array}{c|cc}
 \cdot & 1 & 3 \\ \hline
 1 & 1 & 3 \\
 3 & 1 & 1
\end{array}
\qquad\text{and}\qquad
\begin{array}{c|ccc}
 \cdot & 1 & 2 & 3 \\ \hline
 1 & 1 & 2 & 3 \\
 2 & 1 & 1 & 3 \\
 3 & 1 & 2 & 1
\end{array},
\]
take $s$ and $t$ to be the canonical inclusions and $f\colon{Z\to X}$ and $g\colon{Z \to Y}$ defined respectively by
\[
f(1)=f(3)=1,\quad f(2)=2
\]
and
\[
g(1)=g(2)=1,\quad g(3)=3.
\]
Then $\langle f,g\rangle$ is not a surjection, because $Z$ has three elements while $X\times Y$ has four.
\end{example}

Internal monoids in $n$-permutable varieties are always abelian groups. The proof uses arguments which are similar to the ones given in Proposition~5.3 of~\cite{Rodelo:Internal-categories}. The technique used in~\cite{JRVdL1} for transforming a varietal proof into a categorical one does not work in this specific situation, because the varietal proof uses nested operations.

Even in the context of varieties, implication (B) is generally not an equivalence. In fact, in~\cite{Barbour-Raftery} there are examples of subtractive varieties~\cite{Ursini3} which are not $n$-permutable for any~$n$. On the other hand, it is well known and easy to prove that in any subtractive variety, all internal monoids are abelian groups. 

Let indeed $\WW$ be a subtractive variety, so that it is pointed and admits a binary term $s$ satisfying $s(x,x)=0$ and $s(x,0)=0$. Let $(M,+)$ be a monoid in $\WW$. Then for any $x\in M$, the inverse of $x$ is $x^{\bullet}=s(0,x)$, so that $(M,+)$ is an internal group. It is also abelian:
\begin{multline*}
x+ y=s(x+y,0)=s(x+y,x+x^{\bullet})=s(x,x)+s(y,x^{\bullet})\\
= s(y,x^{\bullet})+s(x,x) = s(y+x,x^{\bullet}+x)=s(y+x,0)=y+x.
\end{multline*}
Note that the addition of $M$ is uniquely determined by $x+y=s(x,s(0,y))$.

We finish by giving a simple alternative counterexample.

\begin{example}
We let $\WW$ be the free subtractive variety. Its objects---triples $(X,s,0)$ which satisfy $s(x,x)=0$ and $s(x,0)=x$ for all $x\in X$---are called \defn{subtraction algebras}. Consider the set $A=\{0,a,b\}$ equipped with the operation $s$ defined by the table
$$
\begin{array}{c|ccc}
 s & 0 & a & b \\ \hline
 0 & 0 & 0 & 0 \\
 a & a & 0 & 0 \\
 b & b & 0 & 0
\end{array}
$$
The internal relation $R=\{(0,0), (a,a), (b,b), (a,b)\}$ on the subtraction algebra~$A$ is reflexive and transitive, but not symmetric. Hence (B) is not an equivalence.
\end{example}

\subsection*{Acknowledgement}
We are grateful to the referee for his helpful comments and suggestions.


\providecommand{\noopsort}[1]{}
\providecommand{\bysame}{\leavevmode\hbox to3em{\hrulefill}\thinspace}
\providecommand{\MR}{\relax\ifhmode\unskip\space\fi MR }
\providecommand{\MRhref}[2]{%
  \href{http://www.ams.org/mathscinet-getitem?mr=#1}{#2}
}
\providecommand{\href}[2]{#2}

\end{document}